\newtheorem{theorem}{Theorem}[section]
\newtheorem{lemma}[theorem]{Lemma}
\newtheorem{remark}[theorem]{Remark}
\numberwithin{equation}{section}
\title{Existence result for differential inclusion with $p(x)$-Laplacian}
\author{
  Sylwia Barna\'s\\
  email: Sylwia.Barnas@im.uj.edu.pl\\
\\
  Cracow University of Technology\\
  Institute of Mathematics\\
  ul. Warszawska 24,\
  31-155 Krak\'ow, Poland\\
\\
  Jagiellonian University\\
  Department of Mathematics and Computer Science\\
  ul. {\L}ojasiewicza 6,\
  30-348 Krak\'ow, Poland}
\begin{document}
\maketitle
\bibliographystyle{plain}

\noindent \textbf{Keywords:}
  $p(x)$-Laplacian, Palais-Smale condition, mountain pass theorem,
  variable exponent Sobolev space.

\noindent \textbf{Abstract:}
  In this paper we study the nonlinear elliptic problem with
  $p(x)$-Laplacian (hemivariational inequality).
  We prove the existence of a nontrivial solution.
  Our approach is based on critical point theory for
  locally Lipschitz functionals due to Chang \cite{changg}.

\section{Introduction} \label{Intr}
  Let $\Omega\subseteq\mathbb{R}^N$ be a bounded domain with a $\mathcal{C}^2$-boundary $\partial \Omega$ and $N>2$.
  In this paper we consider a differential inclusion in $\Omega$ involving a $p(x)$-Laplacian of the type

\[
  \left\{
  \begin{array}{lr}
    -\Delta_{p(x)}u-\lambda |u(x)|^{p(x)-2} u(x) \in \partial j(x, u(x))& \textrm{in  } \Omega, \tag{P}\\
    u=0 & \textrm{on}\ \partial \Omega,
  \end{array}
  \right.
\]
  where $p:\overline{\Omega} \rightarrow \mathbb{R}$ is a continuous function satisfying
\[
  1<p^-:=\inf\limits_{x \in \Omega} p(x) \leqslant p(x) \leqslant  p^+:=\sup\limits_{x \in \Omega} p(x)<N<\infty
\]
 and 
\[
  p^+\leqslant \widehat{p}^*:=\frac{Np^-}{N-p^-}.
\]
  A functional $j(x,t)$ is a measurable in the first variable and locally Lipschitz
  in the second variable.
  By $\partial j(x,t)$ we denote the subdifferential of $j(x,\cdot)$ in the sense of Clarke \cite{Clarke}. The operator 
\[
  \Delta_{p(x)}u:= \textrm{div} \big( |\nabla u(x)|^{p(x)-2} \nabla u(x) \big)
\]
  is the so-called $p(x)$-Laplacian, which becomes $p$-Laplacian when $p(x)\equiv p$.
  Problems with $p(x)$-Laplacian are more complicated than with $p$-Laplacian,
  in particular, they are inhomogeneous and possesses "more nonlinearity".

  In our problem (P) a parametr $\lambda$ appears, for which we will assume that
  $\lambda<\frac{p^-}{p^+}\lambda_*$, where $\lambda_*$ is defined by
\[
  \lambda_*=\inf_{u \in W_0^{1,p(x)}(\Omega)\backslash \{0\}}
  \frac{\int_{\Omega}|\nabla u(x)|^{p(x)}dx}{\int_{\Omega}|u(x)|^{p(x)}dx}.
\]
  It may happen that $\lambda_*=0$ (see Fan-Zhang-Zhao \cite{fanzhangzhao}).\\

More recently, the existence of solutions for variable exponent differential inclusions with different boundary value conditions have been widely investigated by many authors, which are usually reduced to the solutions of Dirichlet and Neumann type problems. 
  For instance, we have papers where hemivariational inequalities involving $p(x)$-Laplacian
  are studied. 
  In Ge-Xue \cite{ge} and Qian-Shen \cite{qian1}, differential inclusions involving $p(x)$-Laplacian
  and Clarke subdifferential with Dirichlet boundary condition is considered.
  In the last paper the existence of two solutions of constant sign is proved.
  Hemivariational inequalities with Neumann boundary condition were studied in
  Qian-Shen-Yang \cite{qian}
  and Dai \cite{dai}.
  In Qian-Shen-Yang \cite{qian}, the inclusions involve a weighted function
  which is indefinite. In Dai \cite{dai}, the existence of infinitely many nonnegative solutions
  is proved. 
All the above mentioned papers deal with the so-called hemivariational inequalities,
  i.e. the multivalued part is provided by the Clarke subdifferential of the nonsmooth potential
  (see e.g. Naniewicz-Panagiotopoulos \cite{nana}).\\

 Our starting point for considering problems with $p(x)$-Laplacian were the papers of Gasi\'nski-Papageorgiou \cite{lg1,lg3,lg5} and
  Kourogenic-Papageorgiou \cite{ko}, where the authors deal with the constant exponent problems i.e. when $p(x)=p$. Moreover, the similar kind of problems were considered in D'Agu\`{i} - Bisci \cite{1} and Marano - Bisci - Motreanu \cite{2}. In the first of this paper, the authors deal with a perturbed eigenvalue Dirichlet-type problem for an elliptic hemivariational
inequality involving the p-Laplacian. In the last paper, the existence of
multiple solutions is investigated  by the use of classical techniques due
to Struwe and a recent saddle
point theorem for locally Lipschitz continuous functionals. \\

  In a recent paper (see Barna\'s \cite{barnas}), we examined nonlinear hemivariational inequality with $p(x)$-Laplacian. We proved an existence theorem under the assumptions that the Clarke subdifferential of the generalized potential is bounded. 
  In the present paper, this hypothesis is more general and we assume the so-called sub-critical growth condition.
  Moreover, we also take more general assumption about behaviour in the neighbourhood of $0$.

  The techniques of this paper differ from these used in the above mentioned paper. 
  Our method is more direct and general.\\

  Hemivariational inequalities arise in physical problems when we deal with 
  nonconvex nonsmooth energy functionals. Such functions appear quite often in 
  mechanics and engineering. 
  For instance, we have a lot of applications to modelling electrorheological fluids (see Ru$\check{\textrm{z}}$i$\check{\textrm{c}}$ka \cite{ruzicka}) and image restoration. \\ 

\section{Mathematical preliminaries} \label{Prelim}

  Let $X$ be a Banach space and $X^*$ its topological dual.
  By $\|\cdot\|$ we will denote the norm in $X$ and by
  $\langle\cdot,\cdot\rangle$ the duality brackets for the pair $(X,X^*)$.
  In analogy with the directional derivative of a convex function,
  we define the generalized directional derivative of a locally Lipschitz function $f$ at $x \in X$ in the direction $h \in X$
  by
\[
    f^0 (x;h)=\limsup_{x' \rightarrow 0, \lambda \rightarrow 0} \frac{f(x+x'+\lambda h)-f(x+x')}{\lambda}.
\]
  The function $h \longmapsto f^0 (x,h) \in \mathbb{R}$ is sublinear,
  continuous so it is the support function of a nonempty, convex and $w^*$-compact set
\[
    \partial f(x)=\{x^* \in X^*: \langle x^*,h \rangle \leqslant f^0 (x,h) \textrm{ for all } h \in X\}.
\]
  The set $\partial f(x)$ is known as generalized or Clarke subdifferential of $f$ at $x$. If $f$ is strictly differentiable at $x$ (in particular if $f$ is continuously G$\hat{\textrm{a}}$teaux differentiable at $x$), then $\partial f(x)=\{f'(x)\}$.

 Let $f:X\rightarrow\mathbb{R}$ be a locally Lipschitz function.
  From convex analysis it is well know that a proper, convex and lower
  semicontinuous function
  $g: X \rightarrow \overline{\mathbb{R}}=\mathbb{R}\cup \{ + \infty\}$
  is locally Lipschitz in the interior of its effective domain $\textrm{dom} g=\{x \in X:g(x)< \infty\}$.

  A point $x \in X$ is said to be a critical point of the locally
  Lipschitz function $f: X \rightarrow \mathbb{R}$, if $0 \in \partial f(x)$.
  If $x \in X$ is local minimum or local maksimum of $f$, then $x$ is critical point, and moreover this time the value $c=f(x)$ is called a critical value of
  $f$. From more details on the generalized subdifferential we refer to Clarke \cite{Clarke}, Gasi\'nski-Papageorgiou \cite{lg3}, Motreanu-Panagiotopoulos \cite{mon} and Motreanu-Radulescu \cite{mon2}.

  The critical point theory for smooth functions uses a compactness type
  condition known as the Palais-Smale condition. In our nonsmooth setting, the
  condition takes the following form. We say that locally Lipschitz function $f:X\rightarrow \mathbb{R} $ satisfies the nonsmooth Palais-Smale condition (nonsmooth PS-condition for short),
  if any sequence $\{x_n\}_{n \geqslant 1} \subseteq X$ such that $\{f(x_n)\}_{n \geqslant 1}$
  is bounded and $m(x_n)=\min\{ \|x^*\|_* :x^* \in \partial f(x_n)\}\rightarrow 0$
  as $n \rightarrow \infty$, has a strongly convergent subsequence.

  The first theorem is due to Chang \cite{changg} and extends to
  a nonsmooth setting the well known "mountain pass theorem" due
  to Ambrosetti -Rabinowitz \cite{ambro} (see also Radulescu \cite{rad}).

\begin{theorem} \label{twierdzenie}
  If $X$ is a reflexive Banach space, $R:X \rightarrow \mathbb{R}$ is
  a locally Lipschitz functional satisfying PS-condition and for some
  $\rho>0$ and $y \in X$ such that $\|y\|>\rho$, we have
\[
    \max\{R(0),R(y)\}<\inf\limits_{\|x\|=\rho} \{R(x)\}=: \eta,
\]
   then R has a nontrivial critical point $x \in X$ such that
   the critical value $c=R(x) \geqslant \eta$ is characterized by the following minimax principle
\[
    c=\inf\limits_{\gamma \in \Gamma}\max\limits_{0 \leqslant \tau \leqslant 1}\{R(\gamma(\tau))\},
\]
  where $\Gamma=\{\gamma \in \mathcal{C}([0,1],X):\gamma(0)=0,\gamma(1)=y\}$.
\end{theorem}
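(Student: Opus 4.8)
The plan is to follow the classical minimax scheme, adapted to the nonsmooth setting through Chang's deformation theory. First I would check that $c$ is a well-defined real number. The family $\Gamma$ is nonempty, since the segment $\gamma_0(\tau)=\tau y$ belongs to it, and because $R$ is continuous and $\gamma_0([0,1])$ is compact we get $\max_{\tau\in[0,1]}R(\gamma_0(\tau))<\infty$, hence $c<\infty$. For the lower bound, fix an arbitrary $\gamma\in\Gamma$: the map $\tau\mapsto\|\gamma(\tau)\|$ is continuous with $\|\gamma(0)\|=0<\rho<\|y\|=\|\gamma(1)\|$, so by the intermediate value theorem there is $\tau_0$ with $\|\gamma(\tau_0)\|=\rho$, whence $\max_{\tau\in[0,1]}R(\gamma(\tau))\geqslant R(\gamma(\tau_0))\geqslant\eta$. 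Taking the infimum over $\Gamma$ yields $c\geqslant\eta>\max\{R(0),R(y)\}$; in particular any critical point of $R$ at level $c$ is different from $0$, i.e. it is nontrivial.

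Next I would argue by contradiction, assuming that $c$ is not a critical value, that is $K_c:=\{x\in X:0\in\partial R(x),\ R(x)=c\}=\emptyset$. The heart of the argument is a nonsmooth deformation lemma. Using the nonsmooth PS-condition one first shows that $m(\cdot)=\min\{\|x^*\|_*:x^*\in\partial R(x)\}$ is bounded away from $0$ on a strip $\{x\in X:|R(x)-c|\leqslant\varepsilon_0\}$ for some $\varepsilon_0>0$; otherwise there would be a sequence along which $R\to c$ and $m\to 0$, and the PS-condition would produce a critical point at level $c$, contradicting $K_c=\emptyset$. One then builds a locally Lipschitz pseudo-gradient vector field associated with (a continuous near-selection of) the minimal-norm element of $\partial R$ on this strip, cuts it off so that it vanishes outside, and integrates the resulting flow to obtain a continuous map $h:[0,1]\times X\to X$ and a number $\varepsilon\in(0,\varepsilon_0)$ such that $h(1,\{R\leqslant c+\varepsilon\})\subseteq\{R\leqslant c-\varepsilon\}$ and $h(t,x)=x$ whenever $R(x)\notin[c-\varepsilon_0,c+\varepsilon_0]$. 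Shrinking $\varepsilon$ if necessary so that $c-\varepsilon>\max\{R(0),R(y)\}$, which is possible because $c\geqslant\eta>\max\{R(0),R(y)\}$, the deformation fixes both endpoints $0$ and $y$.

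Finally I would derive the contradiction. By the definition of $c$ as an infimum, choose $\gamma\in\Gamma$ with $\max_{\tau\in[0,1]}R(\gamma(\tau))<c+\varepsilon$, and set $\widetilde{\gamma}(\tau)=h(1,\gamma(\tau))$. Then $\widetilde{\gamma}$ is continuous, $\widetilde{\gamma}(0)=h(1,0)=0$ and $\widetilde{\gamma}(1)=h(1,y)=y$, so $\widetilde{\gamma}\in\Gamma$, while $\max_{\tau\in[0,1]}R(\widetilde{\gamma}(\tau))\leqslant c-\varepsilon<c$, contradicting $c=\inf_{\gamma\in\Gamma}\max_{\tau\in[0,1]}R(\gamma(\tau))$. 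Hence $K_c\neq\emptyset$, and any $x\in K_c$ is a nontrivial critical point of $R$ with $R(x)=c\geqslant\eta$, the minimax characterization holding by construction. (An alternative to the deformation lemma is to apply Ekeland's variational principle to $\Phi(\gamma)=\max_{\tau}R(\gamma(\tau))$ on the complete metric space $\Gamma$ equipped with the uniform metric, extract an almost-minimizing path, and locate on it an almost-critical point at level $c$; the PS-condition then finishes the argument.)

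I expect the main obstacle to be precisely the deformation lemma: transporting the pseudo-gradient flow machinery to the locally Lipschitz setting, where the gradient is replaced by the minimal-norm element of the Clarke subdifferential and the continuity needed to integrate a flow has to be recovered via a selection argument (here reflexivity of $X$ is used), and carefully locating a PS-sequence at level $c$ in case $K_c=\emptyset$. Once that lemma is available, the minimax argument above is entirely routine.
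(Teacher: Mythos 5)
This theorem is not proved in the paper at all: it is quoted as a known tool, Chang's nonsmooth mountain pass theorem (reference \cite{changg}), and the paper immediately applies it to the functional $R$ built from the $p(x)$-Laplacian problem. So there is no in-paper proof to compare against; the relevant comparison is with Chang's original argument, and your outline follows exactly that classical route: show $c$ is finite and $c\geqslant\eta>\max\{R(0),R(y)\}$ via the intermediate value theorem applied to $\tau\mapsto\|\gamma(\tau)\|$, then assume $K_c=\emptyset$, invoke a nonsmooth deformation lemma, and push an almost-optimal path below level $c-\varepsilon$ to contradict the definition of $c$. That plan is sound, but be aware that everything of substance sits inside the deformation lemma, which you only sketch. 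The genuine technical points there are: (a) the minimal-norm selection $x\mapsto$ argmin of $\|\cdot\|_*$ on $\partial R(x)$ is not continuous, so the pseudo-gradient field cannot be a ``near-selection'' of it directly; one uses the lower semicontinuity of $m(\cdot)$ (equivalently the weak$^*$ upper semicontinuity of $\partial R$ with convex, weak$^*$-compact values) together with a locally finite partition of unity to build a locally Lipschitz field $v$ with $R^0(x;v(x))\leqslant-\delta$ on the strip; (b) reflexivity enters in identifying $\min_{\|h\|\leqslant 1}R^0(x;h)=-m(x)$ (a minimax/separation argument), which is what guarantees a uniformly good descent direction when $m$ is bounded below on the strip; (c) in the step where PS at level $c$ forces $m$ to be bounded away from $0$ on $\{|R-c|\leqslant\varepsilon_0\}$, you need the closedness of the graph of $\partial R$ to conclude $0\in\partial R(x)$ for the limit point; and (d) $\varepsilon_0$ should be chosen at the outset smaller than $c-\max\{R(0),R(y)\}$ so that the flow fixes the endpoints $0$ and $y$. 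With those details supplied (or with your alternative Ekeland-type argument carried out on $\Gamma$ with the uniform metric), the proof is complete and is essentially the one in the cited source.
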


  In order to discuss problem (P),
  we need some theories on the spaces $L^{p(x)}(\Omega)$ and $W^{1,p(x)}(\Omega)$,
  which we call generalized Lebesgue-Sobolev spaces (see Fan-Zhao \cite{fan,orlicz} and Kov$\acute{\textrm{a}}\check{\textrm{c}}$ik-R$\acute{\textrm{a}}$kosnik \cite{kovacik}).

  By $S(\Omega)$ we denote the set of all measurable real-valued function defined on $\mathbb{R}^N$.
  We define
\[
    L^{p(x)}(\Omega)=\{u \in S(\Omega): \int_{\Omega} |u(x)|^{p(x)} dx<\infty \}.
\]
  We furnish $L^{p(x)}(\Omega)$ with the following norm (known as the Luxemburg norm)
\[
    \|u\|_{p(x)}=\|u\|_{L^{p(x)}(\Omega)}=\inf \Big\{\lambda>0: \int_{\Omega} \Big|\frac{u(x)}{\lambda}\Big|^{p(x)}dx \leqslant 1 \Big\}.
\]
 Also we introduce the variable exponent Sobolev space
\[
    W^{1,p(x)}(\Omega) = \{u \in L^{p(x)}(\Omega):|\nabla u| \in L^{p(x)}(\Omega)\},
\]
  and we equip it with the norm
\[
    \|u\|=\|u\|_{W^{1,p(x)}(\Omega)} = \|u\|_{p(x)}+\|\nabla u\|_{p(x)}.
\]
  By $W_0^{1,p(x)}(\Omega)$ we denote the closure of $C_0^\infty (\Omega)$ in $W^{1,p(x)}(\Omega)$.

\begin{lemma} [Fan-Zhao \cite{fan}]  \label{fan}
 If $\Omega \subset \mathbb{R}^N$ is an open domain, then

\noindent (a) the spaces $L^{p(x)}(\Omega)$, $W^{1,p(x)}(\Omega)$ and $W_0^{1,p(x)}(\Omega)$ are
  separable and reflexive Banach spaces;

\noindent
 (b) the space $L^{p(x)}(\Omega)$ is uniformly convex;

\noindent
 (c) if $1 \leqslant q(x) \in \mathcal{C}(\overline{\Omega})$ and $q(x) \leqslant p^*(x)$ (respectively $q(x) < p^*(x)$) for any $x \in \overline{\Omega}$, where
\[
  p^*(x)=\left\{
  \begin{array}{ll}
   \frac{Np(x)}{N-p(x)} & p(x)<N\\
    \infty & p(x) \geqslant N,
  \end{array}
  \right.
\]
 then $W^{1,p(x)}(\Omega)$ is embedded continuously (respectively compactly) in $L^{q(x)}(\Omega)$;

\noindent
 (d) Poincar\'e inequality in $W_0^{1,p(x)}(\Omega)$ holds i.e., there exists a positive constant $c$ such that
 \[
  \|u\|_{p(x)} \leqslant c \|\nabla u\|_{p(x)}  \qquad \textrm{for all } u \in W_0^{1,p(x)}(\Omega);
\]

\noindent
 (e) $(L^{p(x)}(\Omega))^*=L^{p'(x)}(\Omega)$, where $\frac{1}{p(x)}+\frac{1}{p'(x)}=1$
   and for all $u \in L^{p(x)}(\Omega)$ and $v \in L^{p'(x)}(\Omega)$, we have
\[
  \int_{\Omega}|uv|dx\leqslant \Big(\frac{1}{p^-}+\frac{1}{
  {p'}^{-}}\Big)\|u\|_{p(x)}\|v\|_{p'(x)}.
\]
\end{lemma}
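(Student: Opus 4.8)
The statements in Lemma~\ref{fan} are the standard structural facts about variable exponent Lebesgue--Sobolev spaces, and the plan is to reduce all of them, as in Fan--Zhao \cite{fan}, to properties of the modular
\[
  \rho_{p(x)}(u)=\int_\Omega |u(x)|^{p(x)}\,dx .
\]
First I would record the elementary norm--modular relations, valid precisely because $1<p^-\le p^+<\infty$: one has $\|u\|_{p(x)}\le 1$ if and only if $\rho_{p(x)}(u)\le 1$, one has $\|u\|_{p(x)}=1$ if and only if $\rho_{p(x)}(u)=1$, and more generally
\[
  \min\big(\|u\|_{p(x)}^{p^-},\|u\|_{p(x)}^{p^+}\big)\le \rho_{p(x)}(u)\le \max\big(\|u\|_{p(x)}^{p^-},\|u\|_{p(x)}^{p^+}\big).
\]
These follow directly from the definition of the Luxemburg norm and show that norm convergence and modular convergence coincide; completeness of $L^{p(x)}(\Omega)$ then follows by the usual Riesz--Fischer argument transcribed to the modular.

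For part (b) the key step is a Clarkson-type estimate at the level of the modular: since $p^->1$, the functions $t\mapsto|t|^{p(x)}$ are uniformly convex with a modulus that may be chosen independent of $x$, so that $\rho_{p(x)}(u)=\rho_{p(x)}(v)=1$ together with $\rho_{p(x)}(u-v)\ge\varepsilon$ forces $\rho_{p(x)}\big(\tfrac{u+v}{2}\big)\le 1-\delta(\varepsilon)$; feeding this through the norm--modular relations gives uniform convexity of the norm. Parts (a) and (e) are then soft. Separability follows from the density of simple functions with rational values over a countable generating algebra (using $p^+<\infty$). The H\"older inequality in (e) follows by integrating the pointwise Young inequality $ab\le \frac{a^{p(x)}}{p(x)}+\frac{b^{p'(x)}}{p'(x)}$ and normalizing, and the identification $(L^{p(x)}(\Omega))^*=L^{p'(x)}(\Omega)$ is a Radon--Nikodym argument based on the density of simple functions. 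Reflexivity of $L^{p(x)}(\Omega)$ follows from uniform convexity and completeness via the Milman--Pettis theorem; for $W^{1,p(x)}(\Omega)$ and $W_0^{1,p(x)}(\Omega)$ all of (a)--(b) transfer along the isometric embedding $u\mapsto(u,\nabla u)$ of $W^{1,p(x)}(\Omega)$ onto a closed subspace of $L^{p(x)}(\Omega)\times (L^{p(x)}(\Omega))^N$, $W_0^{1,p(x)}(\Omega)$ being in turn a closed subspace.

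The main obstacle is part (c), the Sobolev embedding. Here I would argue by localization, and this is exactly where the continuity of $p$ on the compact set $\overline\Omega$ is used. Cover $\overline\Omega$ by finitely many balls $B_i$ on each of which the oscillation of $p$ is as small as we wish, take a subordinate partition of unity, and on $B_i$ sandwich $p(x)$ between constants $p_i^-\le p(x)\le p_i^+$. On each $B_i$ one then chains the inclusion $L^{p(x)}(B_i)\hookrightarrow L^{p_i^-}(B_i)$ (finite measure, $p_i^-\le p(x)$), the classical Sobolev embedding $W^{1,p_i^-}(B_i)\hookrightarrow L^{(p_i^-)^*}(B_i)$, and the inclusion of $L^{(p_i^-)^*}(B_i)$ into $L^{q(x)}(B_i)$; summing over $i$ against the partition of unity gives $W^{1,p(x)}(\Omega)\hookrightarrow L^{q(x)}(\Omega)$. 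In the strict case $q(x)<p^*(x)$, a uniform gap $q(x)\le p^*(x)-\varepsilon_0$ lets one choose the sandwich fine enough that $q(x)\le (p_i^-)^*$ on each $B_i$, and replacing the Sobolev embedding by the Rellich--Kondrachov theorem on each $B_i$ upgrades weak convergence in $W^{1,p(x)}(\Omega)$ to strong convergence in $L^{q(x)}(\Omega)$ after finitely many successive subsequence extractions. The borderline continuous case $q(x)\le p^*(x)$ is the genuinely delicate one, since there the loss in passing from $p(x)$ to $p_i^-$ cannot be absorbed crudely; this is handled by the more careful localization of Fan--Zhao \cite{fan}, which I would follow.

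Finally, (d) is a short corollary of (c). At every $x$ one has $p(x)<p^*(x)$ (indeed $p^*(x)=Np(x)/(N-p(x))>p(x)$ when $p(x)<N$ and $p^*(x)=\infty$ otherwise), so the embedding $W^{1,p(x)}(\Omega)\hookrightarrow L^{p(x)}(\Omega)$ is compact. If the Poincar\'e inequality failed there would exist $u_n\in W_0^{1,p(x)}(\Omega)$ with $\|u_n\|_{p(x)}=1$ and $\|\nabla u_n\|_{p(x)}\to 0$; then $(u_n)$ is bounded in $W_0^{1,p(x)}(\Omega)$, hence along a subsequence $u_n\rightharpoonup u$ weakly in $W_0^{1,p(x)}(\Omega)$ and $u_n\to u$ strongly in $L^{p(x)}(\Omega)$. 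Weak lower semicontinuity of $u\mapsto\|\nabla u\|_{p(x)}$ forces $\nabla u=0$, so $u$ is constant and, having zero boundary values on the bounded domain $\Omega$, $u=0$; but strong convergence gives $\|u\|_{p(x)}=1$, a contradiction. (Alternatively one can prove (d) directly on $C_0^\infty(\Omega)$ by one-dimensional slicing and pass to the closure.)
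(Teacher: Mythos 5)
There is nothing in the paper to compare your argument against: Lemma~\ref{fan} is quoted as background from Fan--Zhao \cite{fan} (see also \cite{orlicz,kovacik}) and the paper gives no proof of it. Judged on its own, your sketch follows the standard route in that literature: the norm--modular inequalities for the Luxemburg norm, uniform convexity of the modular (uniform in $x$ because $1<p^-\leqslant p^+<\infty$) transferred to the norm, Milman--Pettis for reflexivity, the isometric embedding $u\mapsto(u,\nabla u)$ onto a closed subspace of a product for the Sobolev spaces, pointwise Young for the H\"older inequality, and a Radon--Nikodym argument for the duality $(L^{p(x)})^*=L^{p'(x)}$. All of that is sound in outline, and your compactness proof of the Poincar\'e inequality (d) works once one notes that $u\in W_0^{1,p(x)}(\Omega)$ with $\nabla u=0$ vanishes (extend by zero to $\mathbb{R}^N$), which your parenthetical remark about zero boundary values is implicitly doing.

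The one place where what you wrote does not actually deliver the stated result is the borderline case of (c). Your chain $L^{p(x)}(B_i)\hookrightarrow L^{p_i^-}(B_i)$, $W^{1,p_i^-}(B_i)\hookrightarrow L^{(p_i^-)^*}(B_i)$, $L^{(p_i^-)^*}(B_i)\hookrightarrow L^{q(x)}(B_i)$ requires $q(x)\leqslant (p_i^-)^*$ on $B_i$, and a small oscillation of $p$ only secures this when $q$ stays uniformly below $p^*$; when $q(x)=p^*(x)$ the loss in passing from $p(x)$ to $p_i^-$ cannot be absorbed, as you acknowledge, and ``follow the more careful localization of Fan--Zhao'' is a citation, not a proof --- indeed for merely continuous $p$ the critical embedding is genuinely delicate and the usual arguments impose extra regularity on $p$ (Lipschitz or log-H\"older type). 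Two smaller points: near $\partial\Omega$ your balls meet $\Omega$ in sets that are not balls, so the classical embedding on $B_i\cap\Omega$ needs some boundary regularity (harmless here, since the paper assumes a $\mathcal{C}^2$ boundary, but the lemma as you prove it is not for an arbitrary open domain); and the subcritical compact case, which your covering argument does handle correctly, is in fact the only part of (c) the paper ever uses (all applications have $r^+<\widehat{p}^*$ or $\theta<\widehat{p}^*$). So: acceptable as a reconstruction of cited background, with the critical-exponent continuous embedding left unproved as written.
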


\begin{lemma}[Fan-Zhao \cite{fan}]\label{lemma2}
  Let $\varphi (u)=\int_{\Omega} |u(x)|^{p(x)} dx$ for $u \in L^{p(x)}(\Omega)$ and let
  $\{u_n\}_{n \geqslant 1} \subseteq L^{p(x)}(\Omega)$.

\noindent (a) for $u\neq 0$, we have
\begin{center} $\|u\|_{p(x)} =a \Longleftrightarrow \varphi (\frac{u}{a})=1$;\end{center}

\noindent (b) we have
\begin{center}$\|u\|_{p(x)}<1 (respectively\; =1, >1) \; \Longleftrightarrow \; \varphi(u)<1 (respectively\; =1, >1)$;\end{center}

\noindent (c) if $\|u\|_{p(x)}>1$, then
\begin{center}$\|u\|^{p^-}_{p(x)} \leqslant \varphi (u) \leqslant \|u\|^{p^+}_{p(x)}$;\end{center}

\noindent (d) if $\|u\|_{p(x)}<1$, then
\begin{center}$\|u\|^{p^+}_{p(x)} \leqslant \varphi (u) \leqslant \|u\|^{p^-}_{p(x)}$;\end{center}

\noindent (e) we have
\begin{center}$\lim\limits_{n \rightarrow \infty} \|u_n\|_{p(x)} =0 \; \Longleftrightarrow \; \lim\limits_{n \rightarrow \infty} \varphi(u_n) =0$;\end{center}

\noindent (f) we have
\begin{center}$\lim\limits_{n \rightarrow \infty}\|u_n\|_{p(x)} = \infty \; \Longleftrightarrow \; \lim\limits_{n \rightarrow \infty} \varphi (u_n) =\infty$.\end{center}
\end{lemma}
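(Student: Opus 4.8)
The plan is to reduce all six items to a single auxiliary study: for fixed $u \in L^{p(x)}(\Omega)\setminus\{0\}$, analyse the function $F(\lambda):=\varphi(u/\lambda)=\int_{\Omega} |u(x)/\lambda|^{p(x)}\,dx$, $\lambda>0$. First I would collect the pointwise facts: wherever $u(x)\neq 0$, the map $\lambda\mapsto |u(x)/\lambda|^{p(x)}$ is continuous, strictly decreasing, tends to $0$ as $\lambda\to\infty$ and to $+\infty$ as $\lambda\to 0^+$. Integrating, $F$ is decreasing, strictly so (since $u\neq0$ on a set of positive measure), with $\lim_{\lambda\to\infty}F(\lambda)=0$ and $\lim_{\lambda\to 0^+}F(\lambda)=+\infty$; moreover $F(\lambda)\le \lambda^{-p^-}\varphi(u)$ for $\lambda\ge1$, which, since $p^->0$, shows $F(\lambda)\le1$ for all large $\lambda$ and hence $0<\|u\|_{p(x)}<\infty$. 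The two crucial continuity statements are: (i) $F$ is continuous on any interval on which it is finite (dominate $|u/\lambda_k|^{p(x)}$ by $|u/\mu|^{p(x)}$ with $\mu$ fixed below the limit and apply dominated convergence); and (ii) if $F(a)<\infty$ then $F$ is already finite and continuous slightly to the left of $a$ as well, because for $\lambda\in[a/2,a]$ one has the pointwise bound $|u/\lambda|^{p(x)}=(a/\lambda)^{p(x)}|u/a|^{p(x)}\le 2^{p^+}|u/a|^{p(x)}$, which is integrable --- this is exactly where $p^+<\infty$ enters.

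Granting this, part (a) follows: with $a:=\|u\|_{p(x)}$, monotone convergence along $\lambda_k\downarrow a$ with $F(\lambda_k)\le1$ gives $F(a)\le1$ (in particular $F(a)<\infty$), and if $F(a)<1$ then by (ii) and (i) $F(\lambda)<1$ for some $\lambda<a$, contradicting the definition of the infimum; hence $F(a)=1$, i.e. $\varphi(u/a)=1$. Conversely, if $\varphi(u/b)=1$ then $b$ lies in the region where $F$ is finite, on which $F$ is strictly decreasing, so $F(b)=1=F(a)$ forces $b=a$. Part (b): evaluate $F$ at $\lambda=1$, so $F(1)=\varphi(u)$; since $F$ is strictly decreasing and $F(\|u\|_{p(x)})=1$, the relation $\varphi(u)<1$ (resp. $=1$, $>1$) is equivalent to $\|u\|_{p(x)}<1$ (resp. $=1$, $>1$) --- the case $u=0$ being trivial. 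For (c) and (d) I would use (a) directly: from $\int_{\Omega} |u(x)|^{p(x)} a^{-p(x)}\,dx=1$, if $a>1$ then $a^{-p^+}\le a^{-p(x)}\le a^{-p^-}$ pointwise, so $a^{-p^+}\varphi(u)\le 1\le a^{-p^-}\varphi(u)$, i.e. $\|u\|_{p(x)}^{p^-}\le\varphi(u)\le\|u\|_{p(x)}^{p^+}$; if $0<a<1$ the chain $a^{-p^-}\le a^{-p(x)}\le a^{-p^+}$ holds instead and the same computation gives $\|u\|_{p(x)}^{p^+}\le\varphi(u)\le\|u\|_{p(x)}^{p^-}$.

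Finally, (e) and (f) drop out of (c)--(d): for $n$ large either $\|u_n\|_{p(x)}<1$ or $\|u_n\|_{p(x)}>1$, and in either case $\varphi(u_n)$ is squeezed between $\|u_n\|_{p(x)}^{p^-}$ and $\|u_n\|_{p(x)}^{p^+}$; since $1<p^-\le p^+<\infty$ these bounds both tend to $0$ exactly when $\|u_n\|_{p(x)}\to0$ and both tend to $\infty$ exactly when $\|u_n\|_{p(x)}\to\infty$. The only genuinely technical step is the first paragraph --- pinning down continuity of $F$ up to and including the endpoint $\lambda=\|u\|_{p(x)}$ together with the finiteness of the norm; once the behaviour of the monotone function $F$ is understood, the rest is elementary manipulation of the inequality $a^{p^-}\le a^{p(x)}\le a^{p^+}$ for $a\ge1$ (and its reverse for $0<a\le1$).
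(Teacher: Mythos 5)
This lemma is quoted in the paper from Fan--Zhao \cite{fan} without proof, so there is no in-paper argument to compare against; judged on its own, your proof is correct and is essentially the standard argument for these modular--norm relations. The reduction of everything to the one-variable function $F(\lambda)=\varphi(u/\lambda)$ is exactly the right move: your continuity/monotonicity analysis of $F$ (with the scaling bound $|u/\lambda|^{p(x)}\leqslant 2^{p^+}|u/a|^{p(x)}$ supplying left-continuity at the infimum, which is where $p^+<\infty$ is used) correctly yields the unit-modular identity $\varphi(u/\|u\|_{p(x)})=1$ in (a), and parts (b)--(f) then follow from the elementary pointwise inequalities $a^{p^-}\leqslant a^{p(x)}\leqslant a^{p^+}$ for $a\geqslant 1$ and their reversal for $0<a\leqslant 1$, exactly as you say. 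Two small remarks: since the paper defines $L^{p(x)}(\Omega)$ by $\varphi(u)<\infty$, your function $F$ is automatically finite on all of $(0,\infty)$ (by the same scaling bound), so the care you take about the region of finiteness is not strictly needed here, though it does no harm; and in (e)--(f) the sentence ``for $n$ large either $\|u_n\|_{p(x)}<1$ or $\|u_n\|_{p(x)}>1$'' should be read as following from the relevant hypothesis (the case $\|u_n\|_{p(x)}=1$ gives $\varphi(u_n)=1$ by (b) and fits the same squeeze), which is how you in fact use it.
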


  Similarly to Lemma \ref{lemma2}, we have the following result.

\begin{lemma} [Fan-Zhao \cite{fan}] \label{lemma5}
  Let $\Phi (u) = \int_{\Omega} (|\nabla u(x)|^{p(x)}+|u(x)|^{p(x)})dx$
  for $u \in W^{1,p(x)}(\Omega)$ and let $\{u_n\}_{n \geqslant 1} \subseteq W^{1,p(x)}(\Omega)$.
  Then

\noindent (a) for $u \neq 0$, we have
\begin{center}$\|u\|=a \; \Longleftrightarrow \; \Phi (\frac{u}{a})=1$;\end{center}

\noindent (b) we have
\begin{center}$\|u\|<1 (respectively\; =1, >1) \; \Longleftrightarrow \; \Phi (u) <1 (respectively\; =1, >1) $;\end{center}

\noindent (c) if $\|u\|>1$, then
\begin{center} $\|u\|^{p^-} \leqslant \Phi (u) \leqslant  \|u\|^{p^+}$;\end{center}

\noindent (d) if $\|u\|<1$, then
\begin{center}$\|u\|^{p^+} \leqslant \Phi (u) \leqslant  \|u\|^{p^-}$;\end{center}

\noindent (e) we have
\begin{center}$\lim\limits_{n \rightarrow \infty} \|u_n\|=0 \Longleftrightarrow \lim\limits_{n \rightarrow \infty} \Phi (u_n) =0$;\end{center}

\noindent (f) we have
\begin{center}$\lim\limits_{n \rightarrow \infty} \|u_n\| = \infty \Longleftrightarrow \lim\limits_{n \rightarrow \infty} \Phi (u_n) = \infty$.\end{center}
\end{lemma}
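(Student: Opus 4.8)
The plan is to prove Lemma~\ref{lemma5} by repeating, almost verbatim, the scheme behind Lemma~\ref{lemma2}, with the modular $\varphi$ replaced by $\Phi$ and the Luxemburg norm $\|\cdot\|_{p(x)}$ replaced by the norm of $W^{1,p(x)}(\Omega)$. One preliminary remark is needed: the statement is to be read with $\|u\|=\inf\{\lambda>0:\Phi(u/\lambda)\leqslant 1\}$, i.e.\ the Luxemburg-type norm generated by the modular $\Phi$; this norm is equivalent to the sum norm $\|u\|_{p(x)}+\|\nabla u\|_{p(x)}$ used in Section~\ref{Prelim}, and it is for this modular norm that properties (a)--(f) hold.

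The heart of the matter is the analysis of the scalar function $g(\lambda):=\Phi(u/\lambda)=\int_\Omega \lambda^{-p(x)}\big(|\nabla u(x)|^{p(x)}+|u(x)|^{p(x)}\big)\,dx$ for a fixed $u\neq 0$ and $\lambda>0$. First I would check that $g$ is continuous on $(0,\infty)$: on each compact subinterval $[\lambda_1,\lambda_2]\subset(0,\infty)$ the integrand is dominated by $\max(\lambda_1^{-p^-},\lambda_1^{-p^+})\,(|\nabla u|^{p(x)}+|u|^{p(x)})\in L^1(\Omega)$, so dominated convergence applies. Next, $g$ is strictly decreasing, since for a.e.\ $x$ the map $\lambda\mapsto\lambda^{-p(x)}$ is strictly decreasing and $|u(x)|^{p(x)}>0$ on $\{u\neq 0\}$, a set of positive measure. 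Finally, monotone convergence gives $g(\lambda)\to+\infty$ as $\lambda\to 0^+$ (the integrand increases to $+\infty$ on $\{u\neq 0\}$), while dominated convergence (for $\lambda\geqslant 1$ the integrand is bounded by $|\nabla u|^{p(x)}+|u|^{p(x)}$ and tends to $0$ pointwise) gives $g(\lambda)\to 0$ as $\lambda\to\infty$. Hence there is a unique $a>0$ with $g(a)=\Phi(u/a)=1$, and then $\{\lambda>0:g(\lambda)\leqslant 1\}=[a,\infty)$; by the definition of the norm this is exactly part~(a), i.e.\ $\|u\|=a\iff\Phi(u/a)=1$.

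Parts (b)--(d) follow from (a) together with the elementary scaling of $\Phi$. For (b), the case $u=0$ is trivial, and for $u\neq 0$ one uses $\Phi(u)=g(1)$ and the strict monotonicity of $g$ with $g(\|u\|)=1$, so that $\|u\|<1$, $\|u\|=1$, $\|u\|>1$ correspond respectively to $\Phi(u)<1$, $\Phi(u)=1$, $\Phi(u)>1$. For (c), if $a:=\|u\|>1$, write $\Phi(u)=\int_\Omega a^{p(x)}\big(|\nabla(u/a)|^{p(x)}+|u/a|^{p(x)}\big)\,dx$ and combine $a^{p^-}\leqslant a^{p(x)}\leqslant a^{p^+}$ with $\Phi(u/a)=1$ to obtain $\|u\|^{p^-}\leqslant\Phi(u)\leqslant\|u\|^{p^+}$; part (d) is the same computation with $a<1$, where the exponents are reversed. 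Finally (e) and (f) are consequences of (b)--(d): for instance, if $\Phi(u_n)\to 0$ then eventually $\Phi(u_n)<1$, so by (b) $\|u_n\|<1$ and by (d) $\|u_n\|\leqslant\Phi(u_n)^{1/p^+}\to 0$; the converse and the two implications of (f) are analogous, using (c) in place of (d).

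I do not expect a genuine obstacle here, since this is a standard property of the modular on a variable exponent space. The only points needing real care are the justification of the limits of $g(\lambda)$ as $\lambda\to0^+$ and $\lambda\to\infty$ — which is precisely where $u\in L^{p(x)}(\Omega)$, $\nabla u\in L^{p(x)}(\Omega)$ and $1<p^-\leqslant p^+<\infty$ are used — and the clarification that the norm intended in the statement is the modular norm rather than the sum norm. Once (a) is established, the remainder is bookkeeping.
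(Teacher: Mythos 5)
Your proof is correct. Note first that the paper itself offers no argument for Lemma \ref{lemma5}: it is quoted from Fan--Zhao \cite{fan}, so there is no internal proof to compare against; what you supply is the standard modular argument, and it goes through. The analysis of $g(\lambda)=\Phi(u/\lambda)$ (continuity on compact subintervals of $(0,\infty)$ by dominated convergence, strict monotonicity because $p(x)\geqslant p^->1$ and $\{u\neq 0\}$ has positive measure, $g(\lambda)\to\infty$ as $\lambda\to 0^+$ and $g(\lambda)\to 0$ as $\lambda\to\infty$) gives the unique root characterization in (a); the scaling bound $a^{p^-}\leqslant a^{p(x)}\leqslant a^{p^+}$ combined with $\Phi(u/a)=1$ gives (b)--(d); and (e), (f) follow from (b)--(d) exactly as you say. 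Your preliminary caveat is also the right one to make, and is in fact the only delicate point: the exact equivalences and inequalities in (a)--(d) are the defining properties of the Luxemburg-type norm generated by the modular $\Phi$, not of the sum norm $\|u\|_{p(x)}+\|\nabla u\|_{p(x)}$ with which $W^{1,p(x)}(\Omega)$ is equipped in Section \ref{Prelim}; for the sum norm they hold only up to equivalence constants. This distinction is harmless for (e), (f) and for the boundedness conclusions drawn in Lemma \ref{PS}, which are stable under passing to an equivalent norm, but the exact form of (c), (d) as used in Lemma \ref{lem3} tacitly presupposes the modular norm (or else introduces multiplicative constants that do not affect the argument). So your proof is a complete and correct substitute for the citation, with a clarification the paper leaves implicit.
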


  Consider the following function
\[
    J(u)=\int_{\Omega} \frac{1}{p(x)}|\nabla u|^{p(x)} dx, \qquad \textrm{for all } u \in W_0^{1,p(x)}(\Omega).
\]
  We know that $J \in \mathcal{C}^1 (W_0^{1,p(x)}(\Omega))$ and operator $-\textrm{div}(|\nabla u|^{p(x)-2} \nabla u)$,
  is the derivative operator of $J$ in the weak sense (see Chang \cite{chang}).
  We denote
\[
  A=J': W_0^{1,p(x)}(\Omega) \rightarrow (W_0^{1,p(x)}(\Omega))^*,
\]
  then
\begin{equation}\label{wazne}
  \langle Au,v\rangle = \int_{\Omega} |\nabla u(x)|^{p(x)-2} (\nabla u(x), \nabla v(x)) dx,
\end{equation}
  for all $u, v \in W_0^{1,p(x)}(\Omega)$.

\begin{lemma}[Fan-Zhang \cite{zhang}] \label{lemma3}
  If A is the operator defined above, then $A$ is a continuous,
  bounded, strictly monotone and maximal monotone operator of type $(S_+)$ i.e.,
  if $u_n \rightarrow u$ weak in $W_0^{1,p(x)}(\Omega)$ and
  $\limsup\limits_{n \rightarrow \infty} \langle Au_n, u_n -u \rangle \leqslant 0$,
  implies that $u_n \rightarrow u$ in  $W_0^{1,p(x)}(\Omega)$.
\end{lemma}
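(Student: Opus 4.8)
The plan is to verify the five asserted properties one by one, leaning on Lemmas~\ref{fan} and~\ref{lemma2} for all variable-exponent bookkeeping and on the classical pointwise monotonicity inequalities for the vector field $\xi\mapsto|\xi|^{r-2}\xi$ on $\mathbb{R}^N$.

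First I would dispose of boundedness and continuity. For $u,v\in W_0^{1,p(x)}(\Omega)$, H\"older's inequality (Lemma~\ref{fan}(e)) gives
\[
|\langle Au,v\rangle|\leqslant\int_\Omega|\nabla u|^{p(x)-1}\,|\nabla v|\,dx\leqslant C\,\big\||\nabla u|^{p(x)-1}\big\|_{p'(x)}\,\|\nabla v\|_{p(x)};
\]
since $(p(x)-1)p'(x)=p(x)$, the $L^{p'(x)}$-modular of $|\nabla u|^{p(x)-1}$ equals $\int_\Omega|\nabla u|^{p(x)}\,dx$, so Lemma~\ref{lemma2}(c)--(d) controls $\big\||\nabla u|^{p(x)-1}\big\|_{p'(x)}$ by a function of $\|\nabla u\|_{p(x)}$, proving that $A$ maps bounded sets into bounded sets. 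For continuity, if $u_n\to u$ in $W_0^{1,p(x)}(\Omega)$ then $\nabla u_n\to\nabla u$ in $L^{p(x)}(\Omega;\mathbb{R}^N)$; from an arbitrary subsequence I would extract a further one along which $\nabla u_n\to\nabla u$ a.e.\ and $|\nabla u_n|\leqslant h$ for a fixed $h\in L^{p(x)}(\Omega)$, whence $|\nabla u_n|^{p(x)-2}\nabla u_n\to|\nabla u|^{p(x)-2}\nabla u$ a.e.\ with the fixed $L^{p'(x)}$-majorant $h^{p(x)-1}$; a variable-exponent dominated convergence argument then gives $|\nabla u_n|^{p(x)-2}\nabla u_n\to|\nabla u|^{p(x)-2}\nabla u$ strongly in $L^{p'(x)}(\Omega;\mathbb{R}^N)$, and by the subsequence principle $Au_n\to Au$ in $(W_0^{1,p(x)}(\Omega))^*$.

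Next, for strict monotonicity I would use the elementary inequalities: for every $r>1$ and all $\xi,\eta\in\mathbb{R}^N$ one has $\big(|\xi|^{r-2}\xi-|\eta|^{r-2}\eta,\,\xi-\eta\big)\geqslant0$, with equality only when $\xi=\eta$ (quantitatively $\geqslant c_r|\xi-\eta|^r$ if $r\geqslant2$, and $\geqslant c_r|\xi-\eta|^2\big(|\xi|+|\eta|\big)^{r-2}$ if $1<r<2$). Applying this pointwise with $r=p(x)$,
\[
\langle Au-Av,u-v\rangle=\int_\Omega\big(|\nabla u|^{p(x)-2}\nabla u-|\nabla v|^{p(x)-2}\nabla v,\,\nabla u-\nabla v\big)\,dx\geqslant0,
\]
and equality forces $\nabla u=\nabla v$ a.e., hence $u=v$ by the Poincar\'e inequality (Lemma~\ref{fan}(d)). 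Being monotone, everywhere defined and (hemi)continuous on the reflexive space $W_0^{1,p(x)}(\Omega)$, $A$ is then maximal monotone by the standard Browder--Minty criterion.

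The main work, and the step I expect to be the genuine obstacle, is the $(S_+)$ property. Assume $u_n\rightharpoonup u$ weakly in $W_0^{1,p(x)}(\Omega)$ with $\limsup_n\langle Au_n,u_n-u\rangle\leqslant0$. Since $\langle Au,u_n-u\rangle\to0$ and $A$ is monotone, it follows that $\langle Au_n-Au,u_n-u\rangle\to0$, i.e.\ $e_n:=\big(|\nabla u_n|^{p(x)-2}\nabla u_n-|\nabla u|^{p(x)-2}\nabla u,\,\nabla u_n-\nabla u\big)\to0$ in $L^1(\Omega)$; passing to a subsequence, $e_n\to0$ a.e. Using the coercivity encoded in the inequalities above, at a.e.\ $x$ the sequence $\{\nabla u_n(x)\}$ must stay bounded and each of its cluster points must equal $\nabla u(x)$, so $\nabla u_n\to\nabla u$ a.e.\ along the subsequence. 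Then $|\nabla u_n|^{p(x)-2}\nabla u_n$ is bounded in $L^{p'(x)}(\Omega;\mathbb{R}^N)$ and converges a.e.\ to $|\nabla u|^{p(x)-2}\nabla u$, hence weakly; combining this with $\nabla u_n\rightharpoonup\nabla u$ in $L^{p(x)}(\Omega;\mathbb{R}^N)$ and the identity $\int_\Omega|\nabla u_n|^{p(x)}\,dx=\langle Au_n,u_n-u\rangle+\int_\Omega\big(|\nabla u_n|^{p(x)-2}\nabla u_n,\,\nabla u\big)\,dx$ yields the modular convergence $\int_\Omega|\nabla u_n|^{p(x)}\,dx\to\int_\Omega|\nabla u|^{p(x)}\,dx$. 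Finally I would combine a.e.\ convergence with this modular convergence and the uniform convexity of $L^{p(x)}(\Omega)$ (Lemma~\ref{fan}(b)) --- through a Brezis--Lieb / Radon--Riesz type argument --- to obtain $\nabla u_n\to\nabla u$ in $L^{p(x)}(\Omega;\mathbb{R}^N)$, and then $u_n\to u$ in $W_0^{1,p(x)}(\Omega)$ by Poincar\'e; since the limit is independent of the subsequence, the whole sequence converges. The delicate issues are exactly the variable-exponent forms of these last measure-theoretic steps --- there is no proportionality between norm and modular --- and the degeneration of the pointwise monotonicity estimate when $1<p(x)<2$, which necessitates the H\"older-type bookkeeping in the coercivity argument.
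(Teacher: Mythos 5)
The paper offers no proof of Lemma \ref{lemma3} at all: it is imported verbatim from Fan--Zhang \cite{zhang}, so there is nothing internal to compare your argument with. On its own merits your proposal is correct and follows the standard route (essentially that of the cited source): boundedness via the identity $(p(x)-1)p'(x)=p(x)$ together with Lemma \ref{fan}(e) and Lemma \ref{lemma2}(c)--(d); continuity by subsequence extraction, a.e.\ convergence, an $L^{p(x)}$-majorant and modular-to-norm convergence; strict monotonicity from the pointwise vector inequalities plus Poincar\'e; maximal monotonicity by Browder--Minty; and the $(S_+)$ property by showing $\langle Au_n-Au,u_n-u\rangle\to 0$, extracting a.e.\ convergence of the gradients, and upgrading it to norm convergence through convergence of the modulars. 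Two small points deserve explicit care if you write it out: the constants $c_r$ in the monotonicity inequalities must be chosen uniformly for $r=p(x)\in[p^-,p^+]$, which is possible precisely because $1<p^-\leqslant p^+<\infty$; and the final Radon--Riesz/uniform-convexity step can be replaced by something more elementary in this setting, namely the generalized dominated convergence theorem applied to $|\nabla u_n-\nabla u|^{p(x)}\leqslant 2^{p^+}\big(|\nabla u_n|^{p(x)}+|\nabla u|^{p(x)}\big)$, which gives modular convergence to zero and hence norm convergence by Lemma \ref{lemma2}(e), avoiding any appeal to the geometry of $L^{p(x)}(\Omega)$.
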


\section{Existence of Solutions}
  We start by introducing our hypotheses on the function $j(x,t)$.

\noindent $H(j) \; j:\Omega  \times \mathbb{R} \rightarrow \mathbb{R}$ is a function such that $j(x,0)=0$ on $\Omega$ and

\noindent \textbf{(i)} for all $x \in \mathbb{R}$, the function $\Omega \ni x \rightarrow j(x,t) \in \mathbb{R}$ is measurable;

\noindent \textbf{(ii)} for almost all $x \in \Omega$, the function $\Omega \ni t \rightarrow j(x,t) \in \mathbb{R}$ is locally Lipschitz;

\noindent \textbf{(iii)} for almost all $x \in \Omega$ and all $v \in \partial j(x,t)$, we have $|v| \leq a(x)+c_1|t|^{r(x)-1}$ with $a \in L_+^\infty (\Omega), c_1>0$ and $r \in \mathcal{C}(\overline{\Omega})$, such that $p^+ \leq r^+:=\max\limits_{x \in \Omega} r(x)<\widehat{p}^*:=\frac{Np^-}{N-p^-}$;

\noindent \textbf{(iv)} we have
\[
  \limsup\limits_{|t|\rightarrow \infty} \frac{j(x,t)}{|t|^{p(x)}}< 0,
\]
 uniformly for almost all $x \in \Omega$;

\noindent \textbf{(v)} there exists $\mu >0$, such that
\[
  \limsup\limits_{|t| \rightarrow 0} \frac{j(x,t)}{|t|^{p(x)}} \leqslant -\mu,
\]
uniformly for almost all $x \in \Omega$;

\noindent \textbf{(vi)} there exists $\overline{u} \in W^{1,p(x)}_0 (\Omega) \setminus \{0\}$, such that
\[
  \frac{1}{p^-}\int_{\Omega}|\nabla \overline{u} (x)|^{p(x)}dx
  +\frac{\lambda_-}{p^-}\int_{\Omega}|\overline{u}(x)|^{p(x)}dx \leqslant \int_{\Omega} j(x, \overline{u}(x))dx,
\]
  where $\lambda_-:=\max\{0, -\lambda\}$.

\bigskip

\begin{remark}
   Hypothesis $H(j)(vi)$ can be replaced by 

\noindent \textbf{(vi')}
  there exists $\overline{u} \in W^{1,p(x)}_0 (\Omega) \setminus \{0\}$, such that
\[
  \overline{c} \|\overline{u}\|^{p^+} \leqslant \int_{\Omega} j(x, \overline{u}(x))dx,
  \qquad \textrm{ if } \; \|\overline{u}\| \geqslant 1,
\]
  or
\[
  \overline{c} \|\overline{u}\|^{p^-} \leqslant \int_{\Omega} j(x, \overline{u}(x))dx,
  \qquad \textrm{ if } \; \|\overline{u}\| < 1,
\]
\end{remark}
where $\overline{c}:=\max\{\frac{1}{p^-},\frac{\lambda_-}{p^-}\}$.

  We introduce locally Lipschitz functional $R: W_0^{1,p(x)}(\Omega) \rightarrow \mathbb{R}$ defined by
\[
  R(u)=\int_{\Omega} \frac{1}{p(x)}|\nabla u(x)|^{p(x)}dx -\int_{\Omega}  
  \frac{\lambda}{p(x)}|u(x)|^{p(x)}dx - \int_{\Omega} j(x,u(x))dx,
\]
  for all $u \in W_0^{1,p(x)}(\Omega)$. 

\begin{remark}
  The existence of nontrival solution for problem (P) was also considered in
  paper Barna\'s \cite{barnas}. In contrast to the last paper, instead of linear
  growth in $H(j)(iii)$ we assume the so-called sub-critical growth condition.   
  Moreover, condition $H(j)(v)$ is more general.
\end{remark}

\begin{lemma}\label{PS}
  If hypotheses $H(j)$ hold and $\lambda \in (-\infty, \frac{p^-}{p^+}\lambda_*)$, then $R$ satisfies the nonsmooth PS-condition.
\end{lemma}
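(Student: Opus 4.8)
The plan is the classical two-step verification of the nonsmooth Palais--Smale condition: first I would prove that an arbitrary PS-sequence $\{u_n\}\subseteq W_0^{1,p(x)}(\Omega)$ is bounded, and then I would pass to a weakly convergent subsequence and upgrade weak convergence to strong convergence using the $(S_+)$-property of the operator $A$ from Lemma~\ref{lemma3}. The step I expect to be the real obstacle is the boundedness of $\{u_n\}$, where coercivity of $R$ must be extracted from the one-sided condition $H(j)(iv)$ while the term $-\lambda\int_\Omega\frac{1}{p(x)}|u|^{p(x)}dx$ is neutralised; this is exactly where the hypothesis $\lambda<\frac{p^-}{p^+}\lambda_*$ (which, when $\lambda_*=0$, simply reads $\lambda<0$) is used.

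\emph{Boundedness.} Given $\{u_n\}$ with $|R(u_n)|\le M_0$ and $m(u_n)\to 0$, I would pick $u_n^*\in\partial R(u_n)$ with $\|u_n^*\|_*=m(u_n)$. Since $R=J-\lambda\Psi-I_j$, where $J(u)=\int_\Omega\frac{1}{p(x)}|\nabla u|^{p(x)}dx$ and $\Psi(u)=\int_\Omega\frac{1}{p(x)}|u|^{p(x)}dx$ are of class $\mathcal{C}^1$ and $I_j(u)=\int_\Omega j(x,u(x))dx$ is locally Lipschitz, the calculus rules for the Clarke subdifferential (see \cite{Clarke}) give $u_n^*=Au_n-\lambda|u_n|^{p(x)-2}u_n-w_n$ with $w_n\in L^{r'(x)}(\Omega)$ and $w_n(x)\in\partial j(x,u_n(x))$ a.e. For the boundedness I would use only $R(u_n)\le M_0$: by $H(j)(iv)$ there are $\beta>0$ and $M\ge 1$ with $j(x,t)\le-\beta|t|^{p(x)}$ for $|t|\ge M$, and combining this with the subcritical growth $H(j)(iii)$ and the Lebourg mean value theorem on $[0,t]$ for $|t|\le M$ yields a constant $C_1$ with $j(x,t)\le C_1-\beta|t|^{p(x)}$ for all $t$ and a.e.\ $x$. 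Inserting this into $R$ and using $\frac{1}{p^+}\le\frac{1}{p(x)}\le\frac{1}{p^-}$, for $0\le\lambda<\frac{p^-}{p^+}\lambda_*$ (so $\lambda_*>0$ and $\int_\Omega|u_n|^{p(x)}dx\le\lambda_*^{-1}\int_\Omega|\nabla u_n|^{p(x)}dx$) I obtain
\[
  R(u_n)\ge\Big(\frac{1}{p^+}-\frac{\lambda}{p^-\lambda_*}\Big)\int_\Omega|\nabla u_n|^{p(x)}dx+\beta\int_\Omega|u_n|^{p(x)}dx-C_1|\Omega|,
\]
whose leading coefficient is positive by the hypothesis on $\lambda$; for $\lambda<0$ a simpler bound with positive coefficients holds at once. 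In either case $R(u_n)\ge c_0\Phi(u_n)-C_1|\Omega|$ for some $c_0>0$, so $\Phi(u_n)$ is bounded, and by Lemma~\ref{lemma5}(f) the sequence $\{u_n\}$ is bounded in $W_0^{1,p(x)}(\Omega)$.

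\emph{Strong convergence.} By reflexivity (Lemma~\ref{fan}(a)) I would extract a subsequence, still denoted $\{u_n\}$, with $u_n\rightharpoonup u$ in $W_0^{1,p(x)}(\Omega)$. Since $r(x)\le r^+<\widehat{p}^*\le p^*(x)$ for every $x\in\overline{\Omega}$, Lemma~\ref{fan}(c) provides a compact embedding $W_0^{1,p(x)}(\Omega)\hookrightarrow L^{r(x)}(\Omega)$, hence (along a further subsequence) $u_n\to u$ in $L^{r(x)}(\Omega)$, in $L^{p(x)}(\Omega)$ and a.e.\ in $\Omega$. I would then write
\[
  \langle Au_n,u_n-u\rangle=\langle u_n^*,u_n-u\rangle+\lambda\int_\Omega|u_n|^{p(x)-2}u_n(u_n-u)dx+\int_\Omega w_n(u_n-u)dx
\]
and check that each term on the right tends to $0$: the first because $\|u_n^*\|_*\to 0$ and $\{\|u_n\|\}$ is bounded; the third because $|w_n(u_n-u)|\le a(x)|u_n-u|+c_1|u_n|^{r(x)-1}|u_n-u|$, with $\||u_n|^{r(x)-1}\|_{r'(x)}$ bounded (as $\int_\Omega|u_n|^{r(x)}dx$ is bounded, Lemma~\ref{lemma2}) and $\|u_n-u\|_{r(x)}\to 0$, so that Hölder's inequality (Lemma~\ref{fan}(e)) applies; the second because $\||u_n|^{p(x)-2}u_n\|_{p'(x)}$ is bounded and $\|u_n-u\|_{p(x)}\to 0$. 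Consequently $\limsup_n\langle Au_n,u_n-u\rangle\le 0$, and the $(S_+)$-property in Lemma~\ref{lemma3} yields $u_n\to u$ strongly in $W_0^{1,p(x)}(\Omega)$ — the strongly convergent subsequence required by the nonsmooth PS-condition.
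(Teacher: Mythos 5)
Your proposal is correct and follows essentially the same route as the paper: the coercivity-type bound $j(x,t)\leqslant C_1-\beta|t|^{p(x)}$ obtained from $H(j)(iv)$ together with $H(j)(iii)$ and the Lebourg mean value theorem, the Rayleigh-quotient estimate combined with $\lambda<\frac{p^-}{p^+}\lambda_*$ (with the sign cases of $\lambda$) to get boundedness in $W_0^{1,p(x)}(\Omega)$, then the compact embedding, the decomposition $u_n^*=Au_n-\lambda|u_n|^{p(x)-2}u_n-w_n$, the H\"older estimates giving $\limsup_n\langle Au_n,u_n-u\rangle\leqslant 0$, and the $(S_+)$-property of $A$. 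Your treatment of the boundedness step is merely a streamlined version of the paper's two-case argument, and your observation that $w_n\in L^{r'(x)}(\Omega)$ is in fact the more accurate integrability class for the Clarke-subgradient term.
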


\begin{proof}

  Let $\{u_n\}_{n \geq 1} \subseteq W_0^{1,p(x)}(\Omega)$ be a sequence such that $\{R(u_n)\}_{n \geq 1}$ is bounded and $m(u_n) \rightarrow 0$ as $n \rightarrow \infty.$
  We will show that $\{u_n\}_{n \geq 1} \subseteq W_0^{1,p(x)}(\Omega)$ is bounded.

  Because $|R(u_n)|\leq M$ for all $n \geq 1$, we have
\begin{equation} \label{gl}
  \int_{\Omega} \frac{1}{p(x)} |\nabla u_n (x)|^{p(x)} dx-\int_{\Omega} 
  \frac{\lambda}{p(x)}|u_n (x)|^{p(x)}dx - \int_{\Omega} j(x,u_n (x))dx \leq M.
\end{equation}
  So we obtain
\begin{equation}\label{330}
  \int_{\Omega} \frac{1}{p^+}|\nabla u_n (x)|^{p(x)} dx-\int_{\Omega}
  \frac{\lambda_+}{p^-}|u_n (x)|^{p(x)}dx -\int_{\Omega} j(x,u_n (x))dx \leq M,
\end{equation}
  where $\lambda_+:=\max\{\lambda,0\}$.

  From the definition of $\lambda_*$, we have
\begin{equation} \label{lambdaa}
  \lambda_* \int_{\Omega} |u_n(x)|^{p(x)}dx \leq \int_{\Omega} |\nabla u_n(x)|^{p(x)} dx,
\end{equation}
  for all $n \geq 1$.

  Using ($\ref{lambdaa}$) in ($\ref{330}$), we get
\begin{equation}\label{wlozeniee}
  \Big( \frac{\lambda_*}{p^+} - \frac{\lambda_+}{ p^-} \Big) \int_{\Omega}|
  u_n(x)|^{p(x)}dx - \int_{\Omega} j(x,u_n(x))dx \leq M.
\end{equation}

  By virtue of hypotheses $H(j)(iv)$, we know that  
\[
  \limsup\limits_{|t| \rightarrow \infty} \frac{j(x,t)}{|t|^{p(x)}}< 0,
\]
  uniformly for almost all $x \in \Omega$. So we can find $L>0$, such that for almost all $x \in \Omega$, all $t$ such that $|t| \geq L$, we have
\begin{equation}\label{353}
  \frac{j(x,t)}{|t|^{p(x)}}< -c<0.
\end{equation}
  It immediately follows that
\[
  j(x,t) \leq -c|t|^{p(x)} \quad \textrm{for all } t \textrm{ such that } |t|\geqslant L.
\]

  On the other hand, from the Lebourg mean value theorem (see Clarke \cite{Clarke}), for almost al $x \in \Omega$ and all $t \in \mathbb{R}$, we  can find $v(x) \in \partial j(x, k u(x))$ with $0<k<1$, such that
\[
  |j(x,t)-j(x,0)| \leq |v(x)||t|.
\]

  So from hypothesis $H(j)(iii)$, for almost all $x \in \Omega$
\begin{equation} \label{210}
  |j(x,t)| \leq a(x)|t|+c_1|t|^{r(x)} \leq a(x)|t|+c_1|t|^{r^+}+c_2,
\end{equation}
  for some $c_1, c_2>0$. Then for almost all $x \in \Omega$ and all $t$ such that $|t|<L$, from (\ref{210}) it follows that
\begin{equation} \label{123}
  |j(x,t)| \leq c_3,
\end{equation}
  for some $c_3>0$. Therefore, it follows that for almost all $x \in \Omega$ and all $ t \in \mathbb{R}$, we have
\begin{equation} \label{204}
  j(x,t) \leq -c|t|^{p(x)}+\beta,
\end{equation}
  where $\beta>0.$

  We use (\ref{204}) in (\ref{wlozeniee}) and obtain
\[
  \Big( \frac{\lambda_*}{p^+} - \frac{\lambda_+}{ p^-} \Big) \int_{\Omega}| u_n(x)|^{p(x)}dx \leq M - \int_{\Omega} (c|u_n(x)|^{p(x)}-\beta)dx,
\]
  for all $n \geq 1$, which leads to
\begin{equation}\label{2012}
  \Big( \frac{\lambda_*}{p^+} - \frac{\lambda_+}{ p^-}+c \Big)\int_{\Omega}|
  u_n(x)|^{p(x)}dx \leq M_1 \quad \forall n\geq 1,
\end{equation}
  for some $c, M_1>0$.

  We know that $\frac{\lambda_*}{p^+} - \frac{\lambda_+}{ p^-}+c>0$, so
\begin{equation}\label{ogr}
  \textrm{the sequence } \{u_n\}_{n \geq 1} \subseteq L^{p(x)} (\Omega) \textrm{ is bounded}
\end{equation}
  (see Lemmata \ref{lemma2} (c) and (d)).\\

\bigskip

  Now, let us consider two cases.

\noindent \textit{Case $1$. }

  Suppose that $\lambda \leqslant 0$.

  In this case, from (\ref{330}), we get
\[
  \frac{1}{p^+} \int_{\Omega} |\nabla u_n|^{p(x)} dx- \int_{\Omega} j(x,u_n(x))dx \leq M.
\]
  Using (\ref{204}) and fact that $ \{u_n\}_{n \geq 1} \subseteq L^{p(x)} (\Omega)$ is bounded, we obtain
\[
  \frac{1}{p^+} \int_{\Omega} |\nabla u_n|^{p(x)} dx \leq M_2,
\]
 for some $M_2>0.$
 So, we have that
\begin{equation}\label{ogr1}
  \textrm{the sequence } \{\nabla u_n\}_{n \geq 1} \subseteq L^{p(x)}  (\Omega;\mathbb{R}^N) \textrm{ is bounded}
\end{equation}
  (see Lemmata \ref{lemma5} (c) and (d)).\\

\noindent \textit{Case $2$. }

  Suppose that $\lambda > 0$.

  Similar to the first part of our proof, by using again (\ref{lambdaa}) in (\ref{330}) in another way, we obtain
\[
  \Big( \frac{1}{p^+} - \frac{\lambda_+}{\lambda_*p^-} \Big) \int_{\Omega} |\nabla u_n|^{p(x)} dx- \int_{\Omega} j(x,u_n(x))dx \leq M.
\]

  In a similar way, from (\ref{204}) and fact that $ \{u_n\}_{n \geq 1} \subseteq L^{p(x)} (\Omega)$ is bounded, we obtain
\[
  \Big( \frac{1}{p^+} - \frac{\lambda_+}{\lambda_*p^-}+c\Big) \int_{\Omega} |\nabla u_n|^{p(x)} dx \leq M_3,
\]
  for some $M_3, c>0.$ From fact that $\frac{1}{p^+} - \frac{\lambda_+}{\lambda_*p^-}>0$, we have that
\begin{equation}\label{ogr3}
  \textrm{the sequence } \{\nabla u_n\}_{n \geq 1} \subseteq L^{p(x)} (\Omega;\mathbb{R}^N) \textrm{ is bounded.}
\end{equation}

  From (\ref{ogr}), (\ref{ogr1}) and (\ref{ogr3}), we have that
\begin{equation}\label{ogr2}
  \textrm{the sequence } \{u_n\}_{n \geq 1} \subseteq W_0^{1,p(x)} (\Omega)   \textrm{ is bounded}
\end{equation}
(see Lemmata \ref{lemma5} (c) and (d)).\\

  Hence, by passing to a subsequence if necessary, we may assume that
\begin{equation}\label{1}
  \left.
    \begin{array}{ll}
      u_n \rightarrow u, & \textrm{weakly in } W_0^{1,p(x)} (\Omega),\\
      u_n \rightarrow u, & \textrm{in } L^{r(x)}(\Omega),
    \end{array}
  \right.
\end{equation}
  for any $r \in \mathcal{C}(\overline{\Omega})$, with $r^+=\max\limits_{x \in \Omega} r(x)< {\widehat{p}}^*:=\frac{Np^-}{N-p^-}.$ 

  Since $\partial R(u_n) \subseteq (W_0^{1,p(x)}(\Omega))^*$ is weakly compact, nonempty and the norm functional is weakly lower semicontinuous in a Banach space, then we can find $u_n^* \in \partial R(u_n)$ such that $||u_n^*||_*=m(u_n)$, for $n \geq 1$.

  Consider the operator $A:W_0^{1,p(x)}(\Omega) \rightarrow (W_0^{1,p(x)}(\Omega))^*$ defined by (\ref{wazne}).

  Then, for every $n \geq 1$, we have
\begin{equation}\label{11}
u_n^*=Au_n-\lambda |u_n|^{p(x)-2} u_n - v_n^*,
\end{equation}
  where $v_n^* \in \partial \psi (u_n)\subseteq L^{p'(x)} (\Omega)$, for $n \geq 1$, with $\frac{1}{p(x)}+\frac{1}{p'(x)}=1$.\\ $\psi: W_0^{1,p(x)}(\Omega) \rightarrow \mathbb{R}$ is defined by $\psi (u_n)=\int\limits_{\Omega} j(x,u_n(x)) dx$. We know that, if $v_n^* \in \partial \psi (u_n)$, then $v_n^*(x) \in \partial j(x, u_n(x))$ (see Clarke \cite{Clarke}). 

  From the choice of the sequence $\{u_n\}_{n \geq 1} \subseteq W_0^{1,p(x)}(\Omega)$, at least for a subsequence, we have
\begin{equation}\label{489}
  |\langle u_n^*,w \rangle| \leq \varepsilon_n \quad \textrm{for all } w \in W^{1,p(x)}_0(\Omega),
\end{equation}
  with $\varepsilon_n \searrow 0$.

  Putting $w=u_n-u$ in (\ref{489}) and using (\ref{11}), we obtain
\begin{equation} \label{47}
  \Big|\langle Au_n,u_n-u\rangle - \lambda \int_{\Omega} 
  |u_n(x)|^{p(x)-2}u_n(x)(u_n-u)(x)dx -\int_{\Omega} v_n^*(x) 
  (u_n-u)(x)dx\Big|\leq \varepsilon_n,
\end{equation}
  with $\varepsilon_n \searrow 0$.

  Using Lemma \ref{fan}(e), we see that
\begin{eqnarray*}
  &           & \lambda \int_{\Omega} |u_n(x)|^{p(x)-2}u_n(x)(u_n-u)(x)dx\cr
  & \leqslant & \lambda \Big(\frac{1}{p^-}+\frac{1}{p'^-}\Big) \|\, |u_n|^{p(x)-1}\|_{p'(x)}\|u_n-u\|_{p(x)},
\end{eqnarray*}
   where $\frac{1}{p(x)}+\frac{1}{p'(x)}=1$.

  We know that $\{u_n\}_{n \geqslant 1} \subseteq L^{p(x)}(\Omega)$
  is bounded, so using (\ref{1}), we conclude that
\[
  \lambda \int_{\Omega} |u_n(x)|^{p(x)-2}u_n(x)(u_n-u)(x)dx \rightarrow 0 \quad \textrm{as } n \rightarrow \infty
\]
  and
\[
  \int_{\Omega} v_n^*(x) (u_n-u)(x)dx \rightarrow 0 \quad \textrm{as } n \rightarrow \infty.
\]
 If we pass to the limit
as $n \rightarrow \infty$ in (\ref{47}), we have
\begin{equation} \label{68}
  \limsup\limits_{n \rightarrow \infty} \langle Au_n, u_n-u \rangle \leq 0.
\end{equation}

  So from Lemma \ref{lemma3}, we have that $u_n \rightarrow u$ in $W^{1,p(x)}_0(\Omega)$ as $ n \rightarrow \infty$. Thus $R$ satisfies the PS-condition.
\end{proof}

\begin{lemma} \label{lem3}
  If hypotheses $H(j)$ hold and $\lambda<\frac{p^-}{p^+}\lambda_*$,
  then there exist $\beta_1, \beta_2 >0$ such that for all
  $u \in W_0^{1,p(x)}(\Omega)$ with $\|u\|<1$, we have
\[
  R(u) \geqslant \beta_1 \|u\|^{p^+} - \beta_2 \|u\|^\theta,
\]
  with $p^+<\theta \leqslant \widehat{p}^{*}:=\frac{Np^-}{N-p^-}$.
\end{lemma}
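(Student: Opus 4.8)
The plan is to obtain the lower bound by estimating each of the three terms of $R(u)$ separately for $u$ with $\|u\|<1$. For the first term, since $\|u\|<1$ implies (via Poincar\'e's inequality and Lemma~\ref{fan}(d)) that $\|\nabla u\|_{p(x)}$ is comparable to $\|u\|$ and is less than $1$, I would use $\int_\Omega \frac{1}{p(x)}|\nabla u|^{p(x)}\,dx \geqslant \frac{1}{p^+}\int_\Omega |\nabla u|^{p(x)}\,dx \geqslant \frac{1}{p^+}\|\nabla u\|_{p(x)}^{p^+}$ by Lemma~\ref{lemma2}(d), and then replace $\|\nabla u\|_{p(x)}$ by a constant multiple of $\|u\|$ using Poincar\'e. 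This produces a term of the form $a_1\|u\|^{p^+}$ with $a_1>0$.

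For the second term, the issue is its sign: when $\lambda>0$ it must be absorbed into the first term. I would again use the definition of $\lambda_*$, namely $\int_\Omega|u|^{p(x)}\,dx \leqslant \frac{1}{\lambda_*}\int_\Omega|\nabla u|^{p(x)}\,dx$, to bound $\int_\Omega\frac{\lambda}{p(x)}|u|^{p(x)}\,dx \leqslant \frac{\lambda_+}{p^-}\int_\Omega|u|^{p(x)}\,dx \leqslant \frac{\lambda_+}{\lambda_*p^-}\int_\Omega|\nabla u|^{p(x)}\,dx$. Combining the first two terms gives at least $\big(\frac{1}{p^+}-\frac{\lambda_+}{\lambda_*p^-}\big)\int_\Omega|\nabla u|^{p(x)}\,dx$, and the hypothesis $\lambda<\frac{p^-}{p^+}\lambda_*$ is exactly what makes the coefficient $\frac{1}{p^+}-\frac{\lambda_+}{\lambda_*p^-}$ strictly positive (when $\lambda\leqslant0$ the second term has the favorable sign and can simply be dropped). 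Then, as above, Lemma~\ref{lemma2}(d) and Poincar\'e convert this into $\beta_1\|u\|^{p^+}$ for a suitable $\beta_1>0$.

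For the third term $-\int_\Omega j(x,u(x))\,dx$, I would use the growth estimate on $j$ that follows from $H(j)(iii)$ and the Lebourg mean value theorem, namely $|j(x,t)|\leqslant a(x)|t|+c_1|t|^{r(x)}$ (this is essentially inequality~(\ref{210}) in the preceding proof). Integrating, applying H\"older's inequality (Lemma~\ref{fan}(e)) to the $a(x)|t|$ term and controlling $\int_\Omega|u|^{r(x)}\,dx$ via the continuous embedding $W_0^{1,p(x)}(\Omega)\hookrightarrow L^{r(x)}(\Omega)$ from Lemma~\ref{fan}(c) (valid since $r^+<\widehat{p}^*$), one gets $\int_\Omega j(x,u)\,dx \leqslant C_1\|u\| + C_2\|u\|^{r^-}$ or similar, with all exponents appearing at least $1$; absorbing the lower-order behavior near $\|u\|\to0$, and choosing $\theta$ appropriately in the stated range (one wants the single dominating power, so $\theta$ can be taken as the smallest exponent exceeding $p^+$ that arises, which can always be pushed up to lie in $(p^+,\widehat{p}^*]$ by crude estimates since $\|u\|<1$), this yields the bound $-\int_\Omega j(x,u)\,dx \geqslant -\beta_2\|u\|^\theta$.

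I expect the main obstacle to be bookkeeping rather than conceptual: one must be careful that $\|u\|<1$ does \emph{not} immediately give $\|\nabla u\|_{p(x)}<1$, so the Poincar\'e constant $c$ may need to be tracked and one may have to split into the subcases $\|\nabla u\|_{p(x)}\gtrless1$ or simply note that for the range of $\|u\|$ near zero the relevant inequality from Lemma~\ref{lemma2} holds in the needed direction; and one must verify that the exponent $\theta$ coming from the embedding estimate on the $j$-term can be placed in the half-open interval $(p^+,\widehat{p}^*]$ — here the hypothesis $r^+<\widehat{p}^*$ together with $p^+\leqslant r^+$ and the sub-unit norm is what gives the slack. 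Once these are handled, collecting the three estimates gives $R(u)\geqslant\beta_1\|u\|^{p^+}-\beta_2\|u\|^\theta$ with $p^+<\theta\leqslant\widehat{p}^*$.
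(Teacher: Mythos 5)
Your treatment of the first two terms is essentially the paper's argument (Rayleigh quotient to absorb the $\lambda$-term, positivity of $\frac{1}{p^+}-\frac{\lambda}{\lambda_*p^-}$ from $\lambda<\frac{p^-}{p^+}\lambda_*$, modular inequalities for small norm); note in passing that the case-splitting you worry about is unnecessary, since $\|u\|=\|u\|_{p(x)}+\|\nabla u\|_{p(x)}<1$ already forces $\|\nabla u\|_{p(x)}<1$. The genuine gap is in the potential term: you never invoke hypothesis $H(j)(v)$, and without it your estimate cannot deliver the claimed bound. From $H(j)(iii)$ and Lebourg alone you get $|j(x,t)|\leqslant a(x)|t|+c_1|t|^{r(x)}$, hence after integration something like $\int_\Omega j(x,u)\,dx\leqslant C_1\|u\|+C_2\|u\|^{s}$ with exponents including $1$ (and possibly $r^-<p^+$). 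For $\|u\|<1$ and $\theta>p^+>1$ the inequality $C_1\|u\|\leqslant \beta_2\|u\|^{\theta}$ is false for small $\|u\|$ (the ratio $\|u\|^{1-\theta}$ blows up), so "absorbing the lower-order behavior near $\|u\|\to0$" is exactly the step that fails: crude estimates under $\|u\|<1$ let you raise exponents only of quantities bounded away from zero, not of quantities tending to zero. Indeed $\beta_1\|u\|^{p^+}-C_1\|u\|$ is negative near the origin, so the mountain-pass geometry would be lost; the lemma is simply not provable from $H(j)(iii)$ alone (take $j(x,t)=a_0|t|$ near $t=0$, $a_0>0$, as a counterexample to the intended chain of inequalities).

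The missing idea is the pointwise splitting in $t$ that the paper performs. By $H(j)(v)$ there is $\delta>0$ with $j(x,t)\leqslant-\frac{\mu}{2}|t|^{p(x)}$ for $|t|\leqslant\delta$, while for $|t|>\delta$ the bound from $H(j)(iii)$ is dominated pointwise by $\gamma|t|^{\theta}$ (with $r^+\leqslant\theta<\widehat{p}^{*}$, $\theta>p^+$) precisely because $|t|$ is bounded away from $0$ there, e.g. $|t|\leqslant\delta^{1-\theta}|t|^{\theta}$. This yields $j(x,t)\leqslant-\frac{\mu}{2}|t|^{p(x)}+\gamma|t|^{\theta}$ for all $t$, which is inequality (\ref{porow}); the $-\frac{\mu}{2}|t|^{p(x)}$ part then even helps, since together with the gradient term it produces the full modular $\int_\Omega\big(|\nabla u(x)|^{p(x)}+|u(x)|^{p(x)}\big)dx\geqslant\|u\|^{p^+}$ by Lemma \ref{lemma5}(d) (so no Poincar\'e constant is needed), while the $\gamma\|u\|_{\theta}^{\theta}$ term is controlled by the embedding $W_0^{1,p(x)}(\Omega)\hookrightarrow L^{\theta}(\Omega)$ of Lemma \ref{fan}(c). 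Insert this use of $H(j)(v)$ and your argument goes through; as written, the third-term estimate does not.
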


\begin{proof}
  From hyphothesis $H(j)(v)$, we can find $\delta >0$,
  such that for almost all $x \in \Omega$ and all $t$ such that $|t|\leqslant \delta$, we have
\[
  j(x,t) \leqslant \frac{-\mu}{2}|t|^{p(x)}.
\]
  On the other hand, from hypothesis $H(j)(iii)$, we know
  that for almost all $x \in \Omega$ and all $t$ such that $|t|>\delta$, we have
\[
 |j(x,t)| \leq a_1|t|+c_1|t|^{r(x)},
\]
  for some $a_1, c_1>0$.
  Thus for almost all $x \in \Omega$ and all $t \in \mathbb{R}$ we have
\begin{equation}\label{porow}
  j(x,t) \leqslant\frac{-\mu}{2}|t|^{p(x)}+\gamma |t|^\theta,
\end{equation}
  for some $\gamma>0$ and $p^+<\theta<\widehat{p}^*$.

\bigskip

  Let us consider two cases.\\

\noindent \textit{Case 1. }
  Suppose that $\lambda \leqslant 0$. 

  By using (\ref{porow}), we obtain that
\begin{eqnarray*}
   R(u)&    =    &\int_{\Omega} \frac{1}{p(x)} |\nabla u (x)|^{p(x)} dx-\int_{\Omega} \frac{\lambda}{p(x)}|u (x)|^{p(x)}dx - \int_{\Omega} j(x,u (x))dx\\
       &\geqslant& \int_{\Omega} \frac{1}{p^+} |\nabla u (x)|^{p(x)} dx + \int_{\Omega} \frac{\mu}{2}|u(x)|^{p(x)} dx - \gamma \int_{\Omega}|u(x)|^\theta dx.
\end{eqnarray*}

  So, we have
\[
  R(u) \geqslant \beta_1 \Big[\int_{\Omega}|\nabla u(x)|^{p(x)}dx +\int_{\Omega} |u(x)|^{p(x)} dx\Big]-\gamma\|u\|^\theta_\theta,
\]

  where $\beta_1:=\min\{ \frac{1}{p^+},\frac{\mu}{2} \}.$\\

\noindent \textit{Case 2. }
  Suppose that $\lambda \in (0, \frac{p^-}{p^+} \lambda_*)$.

  By using (\ref{porow}) and the Rayleigh quotient, we obtain that

\begin{eqnarray*}
   R(u)&    =    &\int_{\Omega} \frac{1}{p(x)} |\nabla u (x)|^{p(x)} dx-\int_{\Omega} \frac{\lambda}{p(x)}|u (x)|^{p(x)}dx - \int_{\Omega} j(x,u (x))dx\\
       &\geqslant& \int_{\Omega} \frac{1}{p^+} |\nabla u (x)|^{p(x)} dx-\int_{\Omega} \frac{\lambda}{p^-}|u(x)|^{p(x)}dx\\
       &         &\hspace{2.7cm}+ \int_{\Omega} \frac{\mu}{2}|u(x)|^{p(x)} dx - \gamma \int_{\Omega}|u(x)|^\theta dx\\
       &\geqslant& \frac{1}{p^+} \int_{\Omega} |\nabla u(x)|^{p(x)} dx + \frac{\mu}{2} \int_{\Omega} |u(x)|^{p(x)}dx\\
       &         &\hspace{2.7cm} -  \frac{\lambda}{\lambda_*p^-}\int_{\Omega} |\nabla u(x)|^{p(x)} dx -\gamma \|u\|^\theta_\theta\\
       &    =    & \Big(\frac{1}{p^+}-\frac{\lambda}{\lambda_*p^-}\Big)\int_{\Omega} |\nabla u (x)|^{p(x)} dx + \frac{\mu}{2} \int_{\Omega} | u(x)|^{p(x)}dx-\gamma \|u\|^\theta_\theta.
\end{eqnarray*}
  In our case, we have
\[
 \frac{1}{p^+}-\frac{\lambda}{\lambda_*p^-} > 0,
\]
  so
\[
  R(u)\geqslant \beta_1 \Big[\int_{\Omega}|\nabla u(x)|^{p(x)}dx +\int_{\Omega} |u(x)|^{p(x)} dx\Big]-\gamma\|u\|^\theta_\theta,
\]
  where $\beta_1:=\min\{ \frac{1}{p^+}-\frac{\lambda}{\lambda_*p^-},\frac{\mu}{2} \}.$

  As $\theta \leqslant p^*(x)=\frac{Np(x)}{N-p(x)}$, then $W^{1,p(x)}_0(\Omega)$
  is embedded continuously in $L^\theta(\Omega)$
  (see Lemma \ref{fan}(c)).
  So there exists $c>0$ such that
\begin{equation}\label{48}
  \|u\|_\theta \leqslant c\|u\| \qquad \textrm{for all } u \in W ^{1,p(x)}_0(\Omega).
\end{equation}
  Using (\ref{48}) and Lemma \ref{lemma5}(d), for all $u \in W ^{1,p(x)}_0(\Omega)$ with $\|u\|<1$, we have
\[
  R(u) \geqslant \beta_1 \|u\|^{p^+}- \beta_2\|u\|^\theta,
\]
  where $\beta_2=\gamma c^{\theta}$.
\end{proof}

\noindent Using Lemmata \ref{PS} and \ref{lem3}, we can prove the following existence theorem for problem (P).

\begin{theorem}
  If hypotheses $H(j)$ hold and $\lambda<\frac{p^-}{p^+}\lambda_*$, then problem (P) has a nontrival solution.
\end{theorem}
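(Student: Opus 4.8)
The plan is to apply Chang's nonsmooth mountain pass theorem (Theorem~\ref{twierdzenie}) to the locally Lipschitz functional $R$ on the reflexive Banach space $W_0^{1,p(x)}(\Omega)$ (reflexivity being Lemma~\ref{fan}(a)). The nonsmooth PS-condition is already supplied by Lemma~\ref{PS}, so the work left is to (1) check the geometric hypotheses of Theorem~\ref{twierdzenie}, and (2) show that the critical point it produces is a solution of (P).

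For (1), first $R(0)=0$ since $j(x,0)=0$. By Lemma~\ref{lem3}, $R(u)\ge\beta_1\|u\|^{p^+}-\beta_2\|u\|^{\theta}$ for $\|u\|<1$ with $\theta>p^+$; since $\rho^{p^+}(\beta_1-\beta_2\rho^{\theta-p^+})>0$ for $\rho\in(0,1)$ small, I fix such a $\rho$ and put $\eta:=\inf_{\|u\|=\rho}R(u)\ge\beta_1\rho^{p^+}-\beta_2\rho^{\theta}>0=R(0)$. For the far point, take $\overline{u}$ from $H(j)(vi)$. Checking the two cases $\lambda\ge 0$ and $\lambda<0$, one has pointwise $\tfrac{1}{p(x)}\le\tfrac{1}{p^-}$ and $-\tfrac{\lambda}{p(x)}|\overline{u}|^{p(x)}\le\tfrac{\lambda_-}{p^-}|\overline{u}|^{p(x)}$, so
\[
  R(\overline{u})\le\frac{1}{p^-}\int_\Omega|\nabla\overline{u}|^{p(x)}dx+\frac{\lambda_-}{p^-}\int_\Omega|\overline{u}|^{p(x)}dx-\int_\Omega j(x,\overline{u}(x))dx\le 0
\]
by $H(j)(vi)$. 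After possibly shrinking $\rho$ so that $\rho<\|\overline{u}\|$ (legitimate since $\overline{u}\ne 0$), we obtain $\|\overline{u}\|>\rho$ and $\max\{R(0),R(\overline{u})\}\le 0<\eta$, which are exactly the hypotheses of Theorem~\ref{twierdzenie} with $y=\overline{u}$.

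Theorem~\ref{twierdzenie} then gives $u_0\in W_0^{1,p(x)}(\Omega)$ with $0\in\partial R(u_0)$ and $R(u_0)\ge\eta>0=R(0)$, so $u_0\ne 0$. For (2), write $R=\Phi_0-\psi$ with $\Phi_0(u)=\int_\Omega\tfrac{1}{p(x)}|\nabla u|^{p(x)}dx-\int_\Omega\tfrac{\lambda}{p(x)}|u|^{p(x)}dx$, which is $\mathcal{C}^1$ with $\Phi_0'(u)=Au-\lambda|u|^{p(x)-2}u$ by~(\ref{wazne}), and $\psi(u)=\int_\Omega j(x,u(x))dx$ locally Lipschitz. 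By Clarke's calculus ($\Phi_0$ strictly differentiable, $\partial(-\psi)=-\partial\psi$), $\partial R(u_0)=\{\Phi_0'(u_0)-v^*:v^*\in\partial\psi(u_0)\}$, so $0\in\partial R(u_0)$ forces $Au_0-\lambda|u_0|^{p(x)-2}u_0=v^*$ for some $v^*\in\partial\psi(u_0)$; as recalled in the proof of Lemma~\ref{PS}, $v^*\in L^{p'(x)}(\Omega)$ and $v^*(x)\in\partial j(x,u_0(x))$ for a.a. $x\in\Omega$. Testing this equality against $\varphi\in C_0^\infty(\Omega)$ and using~(\ref{wazne}) shows $-\Delta_{p(x)}u_0-\lambda|u_0|^{p(x)-2}u_0=v^*(x)\in\partial j(x,u_0(x))$ in $\Omega$ in the weak sense, the condition $u=0$ on $\partial\Omega$ being built into membership in $W_0^{1,p(x)}(\Omega)$. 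Hence $u_0$ is a nontrivial solution of (P).

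As for where the difficulty sits: Lemmata~\ref{PS} and~\ref{lem3} carry the analytic weight, so the remaining steps are essentially bookkeeping. The two points needing care are the sign analysis producing the inequality $-\tfrac{\lambda}{p(x)}|\overline{u}|^{p(x)}\le\tfrac{\lambda_-}{p^-}|\overline{u}|^{p(x)}$ (which is precisely why $\lambda_-$ enters $H(j)(vi)$) and the identification of $\partial R$ through Clarke's sum rule for a $\mathcal{C}^1$ plus locally Lipschitz functional; neither presents a genuine obstacle.
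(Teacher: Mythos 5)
Your proposal is correct and follows essentially the same route as the paper: mountain pass geometry from Lemma~\ref{lem3} near the origin and from $H(j)(vi)$ (with the $\lambda_-$ estimate) at $\overline{u}$, the PS-condition from Lemma~\ref{PS}, Chang's Theorem~\ref{twierdzenie}, and then identification of $0\in\partial R(u_0)$ via the sum rule to recover the inclusion for (P). Your explicit remarks on choosing $\rho<\|\overline{u}\|$ and on the Clarke subdifferential decomposition only make precise steps the paper treats implicitly.
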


\begin{proof}
  From Lemma \ref{lem3} we know that there exist $\beta_1,\beta_2>0$,
  such that for all $u\in W_0^{1,p(x)} (\Omega)$ with $\|u\|<1$, we have
\[
  R(u) \geqslant \beta_1 \|u\|^{p^+}- \beta_2\|u\|^\theta
  = \beta_1\|u\|^{p^+} \Big( 1-\frac{\beta_2}{\beta_1}\|u\|^{\theta - p^+}\Big).
\]
  Since $p^+ <\theta$, if we choose $\rho >0$ small enough,
  we will have that $R(u)\geqslant L>0$, for all $u \in W_0^{1,p(x)}(\Omega)$,
  with $\|u\|=\rho$ and some $L>0$.

  Now, let $\overline{u} \in W^{1,p(x)}_0 (\Omega)$. We have

\begin{eqnarray*}
  R(\overline{u})&    =    &\int_{\Omega} \frac{1}{p(x)} |\nabla \overline{u} (x)|^{p(x)} dx-\int_{\Omega}
     \frac{\lambda}{p(x)}|\overline{u} (x)|^{p(x)}dx - \int_{\Omega} j(x,\overline{u} (x))dx\\
                 &\leqslant&\frac{1}{p^-} \int_{\Omega} |\nabla \overline{u} (x)|^{p(x)} dx +\frac{\lambda_-}{p^{-}}
     \int_{\Omega} |\overline{u}(x)|^{p(x)} dx-\int_{\Omega} j(x,\overline{u} (x))dx,\\
\end{eqnarray*}
  where $\lambda_-:=\max\{0, - \lambda\}$.

  From hyphothesis $H(j)(v)$, we get
  $R(\overline{u}) \leqslant 0$.
  This permits the use of Theorem \ref{twierdzenie}
  which gives us $u \in W_0^{1,p(x)}(\Omega)$ such that
  $R(u)>0 = R(0)$ and $0 \in \partial R(u)$.
  From the last inclusion we obtain
\[
  0=Au-\lambda |u|^{p(x)-2}u-v^*,
\]
  where $v^* \in \partial \psi(u).$ Hence
\[
  Au=\lambda |u|^{p(x)-2}u+v^*,
\]
  so for all $v \in \mathcal{C}_0^\infty (\Omega)$, we have $\langle Au,v \rangle = \lambda \langle |u|^{p(x)-2}u,v\rangle +\langle v^*,v \rangle$.

  So we have
\begin{eqnarray*}
  &   & \int_{\Omega} |\nabla u(x)|^{p(x)-2}(\nabla u(x), \nabla v(x))_{\mathbb{R}^N}dx\cr
  & = & \int_{\Omega}\lambda |u(x)|^{p(x)-2}u(x)v(x)dx+\int_{\Omega}v^*(x)v(x) dx,
\end{eqnarray*}
  for all $v \in \mathcal{C}_0^\infty(\Omega)$.

  From the definition of the distributional derivative we have
\begin{equation}
  \left\{
  \begin{array}{lr}
    -\textrm{div} \big( |\nabla u(x)|^{p(x)-2} \nabla u(x) \big)=\lambda |u(x)|^{p(x)-2} u(x) +v(x)& \textrm{in } \Omega,\\
    u=0 & \textrm{on}\ \partial \Omega,
  \end{array}
  \right.
\end{equation}
  so
\begin{equation}
  \left\{
  \begin{array}{lr}
    -\Delta_{p(x)}u-\lambda |u(x)|^{p(x)-2} u(x)\in \partial j(x, u(x))& \textrm{in } \Omega,\\
    u=0 & \textrm{on}\ \partial \Omega.
  \end{array}
  \right.
\end{equation}
  Therefore $u \in W_0^{1,p(x)}(\Omega)$ is a nontrivial solution of (P).
\end{proof}

\begin{remark}
  A nonsmooth potential satisfying hypothesis $H(j)$ is for example the one given by the following function:

\[
  j(x,t)= 
   \left\{
     \begin{array}{lcc}
      -\mu|t|^{p(x)} & \textrm{if} & |t| \leqslant 1,\\
      (\mu+\sigma -|2|^{p(x)})|t|-2 \mu-\sigma+|2|^{p(x)} & \textrm{if} & 1<|t|\leqslant 2,\\
      \sigma-|t|^{p(x)} & \textrm{if} & |t| > 2,\\
     \end{array}
    \right.
\]
  with $\mu, \sigma>0$ and continuous function $p:\overline{\Omega} \rightarrow \mathbb{R}$ which satisfies $1<p^- \leqslant p(x) \leqslant p^+<N<\infty $
 and $ p^+\leqslant \widehat{p}^*.$

\end{remark}

\end{document}